\documentclass[11pt]{article}

\usepackage[T1]{fontenc}
\usepackage{lmodern}
\usepackage[margin=1.02in]{geometry}
\usepackage{microtype}
\usepackage{amsmath,amssymb,amsthm,mathtools}
\usepackage{booktabs}
\usepackage{array}
\usepackage{graphicx}
\usepackage[hidelinks]{hyperref}
\hypersetup{
  pdftitle={Twisted Bernoulli Zeros in Quasi-Linear Time: Distribution, Depth,
            and Explicit Hilbert Class Components},
  pdfauthor={Peter Chocian},
  pdfkeywords={cyclotomic fields, generalized Bernoulli numbers, circular units,
               Hilbert class fields, irregular primes, chirp transform}
}

\allowdisplaybreaks
\numberwithin{equation}{section}

\newtheorem{theorem}{Theorem}[section]

\newtheorem{lemma}[theorem]{Lemma}
\theoremstyle{definition}

\theoremstyle{remark}
\newtheorem{remark}[theorem]{Remark}

\newcommand{\Q}{\mathbf Q}
\newcommand{\Z}{\mathbf Z}
\newcommand{\F}{\mathbf F}

\newcommand{\Cl}{\operatorname{Cl}}
\newcommand{\length}{\operatorname{length}}

\newcommand{\nZerosFthree}{4,812}
\newcommand{\nZerosFfive}{4,654}

\newcommand{\nZerosTotal}{27,508}
\newcommand{\chiSqFthree}{0.05}
\newcommand{\chiSqFfive}{3.93}
\newcommand{\ksD}{0.0061}
\newcommand{\ksCrit}{0.0140}
\newcommand{\ksN}{9,465}
\newcommand{\conjBoth}{699}
\newcommand{\conjBothExp}{739.8}
\newcommand{\conjCorr}{0.013}
\newcommand{\conjPrimes}{4,782}
\newcommand{\rateReg}{0.497}
\newcommand{\rateIrr}{0.509}
\newcommand{\chiSqInd}{1.31}
\newcommand{\fracReg}{59.9}

\title{Twisted Bernoulli Zeros in Quasi-Linear Time:\\
Distribution, Depth, and Explicit Hilbert Class Components}
\author{Peter Chocian}
\date{9 August 2026}

\begin{document}
\maketitle

\begin{abstract}
Let $\chi$ be an odd primitive Dirichlet character of conductor $f$ and
order $d$, and let $p\nmid f$ be a prime with $d\mid p-1$.  Under the
additional semisimplicity hypothesis $p\nmid\varphi(f)$, the divided
generalized Bernoulli values
$b_{\chi,j}=\overline{fB_{1,\chi\omega^{-j}}}\in\F_p$ control, through
the characterwise abelian Main Conjecture, the odd isotypic components
of the $p$-class group of $\Q(\zeta_{fp})$.  In the range proven in the
companion papers \cite{TwistedF3,FourierF5}, their zeros produce
explicit circular-unit Kummer generators of the corresponding Hilbert
class field components.

We first show that a single residue-class weight formula reduces the
computation of $b_{\chi,j}$ at arbitrary conductor to a weighted power
sum over $\F_p$, and that a Bluestein chirp factorization then
evaluates the complete spectrum $(b_{\chi,j})_{2\le j\le p-3}$ in
$O(M(p))=\widetilde O(p)$ field operations for fixed conductor, where
$M(p)$ denotes the cost of multiplying degree-$p$ polynomials over
$\F_p$.  This is a direct, finite-field-native alternative to the
generating-function/power-series route, of the same quasi-linear
order; its practical simplicity makes exhaustive fixed-conductor
surveys convenient.

We then report such a survey: all even indices for
$f\in\{3,5\}$ with $p<10^5$, and for every odd primitive character of
conductor $f\le20$ with $p<2\cdot10^4$; in total $\nZerosTotal$ zero
lines over $55{,}121$ character--prime pairs, each zero line certified
by two independent code paths and carrying an exact second-order
digit.  The counts are consistent with the Poisson model
($\chi^2$ statistics $\chiSqFthree$ and $\chiSqFfive$ at conductors
$3$ and $5$), the divided digits show no statistically significant
departure from uniformity, conjugate quartic characters never vanish
at a common index in the tested range, and no statistically significant
association between twisted degeneracy and classical irregularity is
detected; these observations complement, at fixed conductor and much
larger $p$, the $\lambda$-invariant distribution studies of
Delbourgo--Knospe \cite{DelbourgoKnospe} and Knospe \cite{Knospe},
for which a $b_{\chi,j}$ zero corresponds to a $\chi$-irregular branch
(positive $\lambda$-invariant).  Eight
zero lines in our range are non-simple, including one of depth three
at $(f,p,j)=(19,37,16)$; the associated
$\overline\chi\omega^{16}$-component of
$\Cl(\Q(\zeta_{703}))\otimes\Z_{37}$ then has order $37^3=50{,}653$.
The paper's principal object is not the spectrum itself but its
conversion into \emph{explicit certified generators}: the
conductor-three catalogue theorem of \cite{TwistedF3} is extended
from $p<500$ to $p<10^5$, giving $2{,}441$ zero lines with
$p\equiv1\pmod 6$, all simple, each carrying a fresh split-prime
Artin certificate, so that each projected circular unit generates the
complete order-$p$ component of the Hilbert class field.  The largest,
at $p=99{,}991$, is an explicitly generated component of order
$99{,}991$ in the degree-$199{,}980$ field $\Q(\zeta_{299973})$,
certified in under a second, in a range where general-purpose
unconditional class group computation is not available.  A
deterministic integer-arithmetic program reproduces every table.
\end{abstract}

\medskip
\noindent
\textbf{Keywords.}
Cyclotomic fields, generalized Bernoulli numbers, circular units,
Hilbert class fields, irregular primes, discrete Fourier transform.

\noindent
\textbf{2020 Mathematics Subject Classification.}
11R18, 11R23, 11R29, 11Y40, 11Y16.

\section{Introduction}

Let $\chi$ be an odd primitive Dirichlet character of conductor $f>1$
and exact order $d$, let $p\nmid f$ be an odd prime with $d\mid p-1$,
and fix an embedding of the values of $\chi$ into $\F_p$.  Let
$\omega$ be the Teichm\"uller character modulo $p$.  For even $j$,
$2\le j\le p-3$, the divided twisted Bernoulli value
\begin{equation}
\label{eq:b-def}
 b_{\chi,j}
 =\overline{\;\frac1p
 \sum_{\substack{1\leq a<fp\\(a,fp)=1}}
 a\,\chi(a)\widetilde a^{-j}\;}
 =\overline{fB_{1,\chi\omega^{-j}}}\in\F_p ,
\end{equation}
where $\widetilde a$ denotes the Teichm\"uller lift of $a\bmod p$,
occupies the same position for the field $K_p=\Q(\zeta_{fp})$ that the
Bernoulli numerators $B_k$ occupy for $\Q(\zeta_p)$ \cite{Washington}: by the
finite-level odd-character consequence of the abelian Main Conjecture
\cite{MazurWiles,Greither},
\begin{equation}
\label{eq:MC}
 \length_{\Z_p}\bigl(\Cl(K_p)\otimes\Z_p\bigr)_\psi
 =v_p\bigl(B_{1,\psi^{-1}}\bigr),
 \qquad
 \psi=\overline\chi\,\omega^{j},
\end{equation}
valid whenever $\psi$ is odd, primitive of conductor $fp$, distinct
from $\omega$, and $p\nmid[K_p:\Q]$.  A zero $b_{\chi,j}=0$ is thus a
\emph{twisted irregular pair}: a certificate that the
$\psi$-component of the class group is nontrivial.

The arithmetic of these twisted divisibilities has a substantial
history.  Ernvall's generalized irregular primes \cite{Ernvall} record
the vanishing of $B_{n,\chi}\bmod p$ for Dirichlet characters; Holden
\cite{Holden} studies the analogous quadratic irregularity and its
distribution.  Most directly, a zero $b_{\chi,j}=0$ is exactly the
statement that the branch $\chi\omega^{-j}$ is irregular, equivalently
that its cyclotomic Iwasawa $\lambda$-invariant is positive.  The
exact order $v_p(B_{1,\chi\omega^{-j}})$ recorded here is a related
but distinct refinement: by \eqref{eq:MC} it is the exact length of
the finite-level isotypic class component, whereas $\lambda$ counts
zeros of the Iwasawa power series and is in general determined only
with the help of derivative information \cite{Knospe}.  The
distribution of these $\lambda$-invariants across characters, and its
dependence on how $p$ splits in $\Q(\chi)$, is the subject of
Delbourgo--Knospe \cite{DelbourgoKnospe}, who tabulate every character
of conductor $\le1000$ for small primes and model the distribution by
$p$-adic random matrix heuristics, and of Knospe \cite{Knospe}, who
gives computable criteria separating $\lambda=0,1,2,\ge3$ and tests the
predicted frequencies.  The present paper is complementary to that
line in three respects, made precise below: it varies $p$ far further
at \emph{fixed} small conductor rather than varying the character at
small $p$; it records \emph{exact} orders of vanishing (second-order
digits) for every zero; and---its main point---it converts the
zeros of the two companion families into \emph{explicit, individually
certified Kummer generators} of the class-field components, which the
invariant computations above do not produce.

The companion papers carry this further.  At conductor three
\cite{TwistedF3} and conductor five \cite{FourierF5}, a universal
Stirling projector polynomial identifies the local Kummer spectrum of
an explicit circular unit with the spectrum
\eqref{eq:b-def}, and each zero line with $p\equiv1\pmod f$ yields an
explicit projected-unit radical whose Kummer extension is everywhere
unramified, certified nontrivial by a single split-prime Artin
symbol, and---by \eqref{eq:MC} together with an exact second-order
digit---generates the \emph{complete} $\psi$-component.  Both papers
close with the same question: how are the zeros $b_{\chi,j}$
distributed as $f$, $\chi$ and $p$ vary?  Their catalogues, at
$p<500$, contain $12$ and $11$ lines respectively.

The companion papers pose, for their two families, the question of how
the zeros $b_{\chi,j}$ are distributed; this paper answers the
computational part of that question at a scale far larger than their
$p<500$ catalogues, and, more substantially, turns the zeros into a
large explicit catalogue of certified class-field generators.  The
contributions are as follows.

\begin{enumerate}
\item \textbf{A uniform weight formula
      (Lemma~\ref{lem:weights}).}  For arbitrary conductor,
      $b_{\chi,j}$ is a weighted power sum
      $\sum_r w_\chi(r\bmod f)r^{-j}$ with at most $f$ distinct
      weights.  The short finite forms of
      \cite[Lemma~3.3]{TwistedF3} and \cite[Lemma~5.4]{FourierF5}
      are the cases $f=3,5$.
\item \textbf{A finite-Fourier evaluation
      (\S\ref{sec:algorithm}).}  A Bluestein chirp factorization
      \cite{Bluestein} evaluates all $b_{\chi,j}$, $2\le j\le p-3$,
      using one polynomial multiplication, hence in
      $O(M(p))=\widetilde O(p)$ arithmetic operations in $\F_p$ for
      fixed conductor.  This matches the order of the standard
      generating-function/power-series method for generalized Bernoulli
      numbers \cite{BCEMS,Harvey}; it is offered as a direct,
      finite-field-native alternative that is convenient in practice,
      not as an asymptotic improvement.
\item \textbf{The survey (\S\ref{sec:survey}--\ref{sec:stats}).}
      $\nZerosTotal$ zero lines over $55{,}121$ character--prime
      pairs ($f\in\{3,5\}$, $p<10^5$; all odd primitive characters
      of conductor $f\le20$, $p<2\cdot10^4$), with every line
      re-proved from the definition by an independent code path and
      depth-checked exactly.  The observed counts, divided digits,
      conjugate-character comparison, and comparison with classical
      irregularity are tested quantitatively against the stated random
      models, in the fixed-conductor, large-$p$ regime complementary
      to the fixed-$p$, varying-character tables of
      \cite{DelbourgoKnospe}.
\item \textbf{Non-simple twisted zeros
      (Theorem~\ref{thm:deep}).}  Eight lines have
      $v_p(B_{1,\chi\omega^{-j}})\ge2$, including one of depth three.
      Each produces an odd class component of order exactly $p^2$ or
      $p^3$, with the valuation certified by an exactly computed
      nonzero leading digit and the Main Conjecture hypotheses
      verified row by row.  None of the eight deep rows in
      Table~\ref{tab:deep} is present in the tabulations of
      \cite{DelbourgoKnospe,Knospe}.  This is a comparison with the
      printed tables, not a priority claim: Appendix~A of
      \cite{DelbourgoKnospe} lists individual characters only for
      $p=3,5,7$, while the explicit list in \cite[\S6.2]{Knospe}
      concerns the related but distinct rank-one branch
      $\chi=\theta\omega$.  We therefore make no priority claim for
      the divisibilities themselves; the exact finite-level
      class-component orders appear to be new.
\item \textbf{The extended conductor-three catalogue
      (Theorem~\ref{thm:extended}).}  For $p<10^5$,
      $p\equiv1\pmod6$, there are exactly $2{,}441$ zero lines; all
      are simple, and every projected circular unit
      $\widetilde\mu_{p-j}$ of \cite{TwistedF3} generates the
      complete order-$p$ component of the Hilbert class field, each
      instance certified by a fresh split-prime witness listed in
      the ancillary files.  The largest lies in the
      degree-$199{,}980$ field $\Q(\zeta_{299973})$
      (Theorem~\ref{thm:record}).
\end{enumerate}

Everything is deterministic integer arithmetic; the ancillary
files (\S\ref{sec:repro}) contain the complete data and a program
that reproduces every table, including independent re-verification
of all certificates.

\section{The residue-class weight formula}
\label{sec:weights}

\begin{lemma}[Residue-class weight formula]
\label{lem:weights}
Let $\chi$, $f$, $d$, $p$ be as above, and for
$c\in\Z/f\Z$ put
\[
 w_\chi(c)=\sum_{m=0}^{f-1}m\,\chi\bigl((c+mp)\bmod f\bigr)
 \in\F_p .
\]
Then, for every even $j$ with $2\le j\le p-3$,
\begin{equation}
\label{eq:weights}
 \boxed{\qquad
 b_{\chi,j}
 =\sum_{r=1}^{p-1}w_\chi(r\bmod f)\,r^{-j}
 \quad\text{in }\F_p .
 \qquad}
\end{equation}
The weights depend only on $r\bmod f$, on $p\bmod f$, and on the
embedded character values.
\end{lemma}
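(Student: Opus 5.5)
Starting from the definition \eqref{eq:b-def}, parametrize the units $a$ with $1\le a<fp$, $(a,fp)=1$, by writing $a=r+mp$ with $1\le r\le p-1$ and $0\le m\le f-1$. This is a bijection onto the integers in $[1,fp)$ prime to $p$, and the coprimality to $f$ is handled automatically by $\chi$ vanishing on non-units. The Teichm\"uller factor depends only on $a\bmod p=r$, so $\widetilde a^{-j}=\widetilde r^{-j}$. The sum inside the bar therefore splits as
\[
 \frac1p\sum_{r=1}^{p-1}\widetilde r^{-j}\sum_{m=0}^{f-1}(r+mp)\,\chi(r+mp)
 =\frac1p\sum_{r=1}^{p-1}r\,\widetilde r^{-j}\sum_{m=0}^{f-1}\chi(r+mp)
 +\sum_{r=1}^{p-1}\widetilde r^{-j}\sum_{m=0}^{f-1}m\,\chi(r+mp).
\]

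The second piece is $p$-integral, and reducing it mod $p$ gives exactly $\sum_r w_\chi(r\bmod f)\,r^{-j}$. Here $\chi(r+mp)$ depends only on $(r+mp)\bmod f$, which is determined by $c=r\bmod f$ and $m$, and $\widetilde r\equiv r$. This also gives the last assertion: the weights depend on $c$, on $p\bmod f$ and on the embedded values. The whole proof therefore reduces to showing that the first piece vanishes.

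For the first piece, use $p\nmid f$. As $m$ runs over $\Z/f\Z$, the residues $r+mp$ run over all of $\Z/f\Z$, so $\sum_{m}\chi(r+mp)=\sum_{x\bmod f}\chi(x)=0$ because $\chi$ is nontrivial. The first piece is thus identically zero as an algebraic number, before any reduction, and no $1/p$ denominator survives.

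The only points needing care are bookkeeping ones rather than a genuine obstacle. The first is the bijection $a\leftrightarrow(r,m)$, including that $r=0$ is excluded precisely by $(a,p)=1$. The second is that reduction mod $p$ is compatible with the fixed embedding of character values, so that the bar commutes with the finite sums. With the identity $\overline{fB_{1,\chi\omega^{-j}}}$ taken as given in \eqref{eq:b-def}, no parity or range condition on $j$ is used beyond the hypothesis that $j$ is even with $2\le j\le p-3$.
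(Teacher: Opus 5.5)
Your proposal is correct and follows the paper's proof essentially step for step. You group the terms by $r=a\bmod p$, kill the $r\sum_m\chi(r+mp)$ term by orthogonality over the complete residue system $\{r+mp\}\bmod f$ (using $p\nmid f$), and read off $p\,w_\chi(r\bmod f)$ from the remaining term. Your explicit handling of the bijection $a\leftrightarrow(r,m)$, with non-units modulo $f$ absorbed by $\chi=0$, fills in bookkeeping the paper leaves implicit.
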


\begin{proof}
Group the sum in \eqref{eq:b-def} by the residue $r=a\bmod p$; the
summands are $a=r+mp$, $0\le m<f$, and all share the Teichm\"uller
factor $\widetilde r^{\,-j}$, which reduces to $r^{-j}$ modulo $p$.
Since $p$ is invertible modulo $f$, the map
$m\mapsto(r+mp)\bmod f$ is a bijection of $\Z/f\Z$, so
$\sum_m\chi(r+mp)=\sum_{c\bmod f}\chi(c)=0$ and
\[
 \sum_{m}(r+mp)\,\chi(r+mp)
 =r\cdot0+p\sum_m m\,\chi(r+mp)
 =p\,w_\chi(r\bmod f).
\]
Dividing by $p$ and summing over $r$ gives \eqref{eq:weights}.
\end{proof}

At $f=3$, $p\equiv1\pmod6$, the weights are $(-1,-1,2)$, recovering
$b_j=3\sum_{r\equiv2\,(3)}r^{-j}$ \cite[Lemma~3.3]{TwistedF3}; at
$f=5$, $p\equiv1\pmod{20}$, they recover the vector
$(-3-s,-3-s,2-s,2+4s,2-s)$ of \cite[Lemma~5.4]{FourierF5}.  We
emphasize that Lemma~\ref{lem:weights} requires only $d\mid p-1$, not
$p\equiv1\pmod f$: the survey below therefore also covers, for
example, $f=3$ with $p\equiv5\pmod6$, where the radical construction
of \cite{TwistedF3} is not yet available.

The same grouping applied with Teichm\"uller lifts modulo $p^2$
yields the second-order form.  With
$\widehat\chi$ the lifted character values,
$\widehat w_\chi(c)=\sum_m m\,\widehat\chi((c+mp)\bmod f)$ in
$\Z/p^2\Z$, and
$T_{\chi,j}=\sum_{a<fp}a\,\widehat\chi(a)\widehat a^{-j}$ the full
Teichm\"uller sum,
\begin{equation}
\label{eq:second-order}
 \frac{T_{\chi,j}}p
 =\sum_{r=1}^{p-1}\widehat w_\chi(r\bmod f)\,\omega(r)^{-j}
 \pmod{p^2},
\end{equation}
so one zero line is depth-checked, and its exact divided digit
$\overline{B_{1,\chi\omega^{-j}}/p}$ computed, in $O(p)$ operations
modulo $p^2$.  Higher digits are obtained from the full sum
$T_{\chi,j}$ modulo $p^K$; the survey uses $K=6$ for the non-simple
lines.

\section{The complete spectrum in quasi-linear time}
\label{sec:algorithm}

Fix a primitive root $g$ modulo $p$ and write $r=g^t$,
$0\le t\le p-2$.  With $n=p-1$ and $a_t=w_\chi(g^t\bmod f)$,
formula \eqref{eq:weights} becomes
\[
 b_{\chi,j}=\sum_{t=0}^{n-1}a_t\,g^{-jt},
\]
a discrete Fourier transform of length $n$ over $\F_p$, evaluated at
every index $j$ simultaneously.  Since $n$ is not smooth in general,
we use the Bluestein factorization \cite{Bluestein}
\[
 jt=\binom{j+t}2-\binom j2-\binom t2 ,
\]
which converts the transform into a single acyclic convolution:
with $u_t=a_tg^{\binom t2}$ and $v_s=g^{-\binom s2}$,
\[
 b_{\chi,j}=g^{\binom j2}\sum_{t=0}^{n-1}u_tv_{j+t},
\]
one polynomial multiplication of length ${\approx}3p$ over $\F_p$.
If $M(n)$ denotes the cost of multiplying degree-$n$ polynomials over
$\F_p$, the complete spectrum therefore costs $O(M(p))$ field
operations and $O(p)$ memory per character.  With quasi-linear
polynomial arithmetic (as supplied in the relevant ranges by FLINT
\cite{FLINT}) this is $\widetilde O(p)$; the Bluestein reduction itself
does not require the stronger assertion $M(p)=O(p\log p)$ for every
prime and every implementation.

\begin{remark}[Relation to the series method]
For the trivial character the analogous task---all
$B_k\bmod p$, $k\le p-3$, equivalently the ordinary irregular
indices---has long been computable in $\widetilde O(p)$ time by
power-series inversion of $(e^x-1)/x$ over $\F_p$, the engine of the
irregular-prime industry \cite{BCEMS}; see also \cite{Harvey} for
the computation of individual Bernoulli numbers.  The same route
extends to arbitrary $\chi$: the generating function
$\sum_{a\le f}\chi(a)te^{at}/(e^{ft}-1)$ yields all
$B_{n,\chi}\bmod p$ by one truncated series inversion and
multiplication, again in $\widetilde O(p)$ for fixed $f$, and this is
how existing systems compute generalized Bernoulli numbers.  The
contribution of Lemma~\ref{lem:weights} plus Bluestein is therefore
not asymptotic: it is a direct finite-field evaluation---no
characteristic-zero or $p$-adic series lift, one convolution, the
character entering only through $f$ weights---which we found simple
to implement, easy to validate against the definition, and fast in
practice.  For variable conductor the stated $O(M(p))$ excludes the
$O(f^2)$ (trivial in our range) cost of forming the weights.
\end{remark}

In the implementation used here (Python driving FLINT), the complete
spectrum at $p\approx10^5$ takes about $0.3$ seconds on one core; the
full conductor-three family below ($9{,}590$ primes,
$2.3\cdot10^8$ indices) took $24$ minutes.  Ordinary irregular
indices for all $p<10^5$ were recomputed alongside by series
inversion, for the independence statistics of \S\ref{sec:stats}.

\section{Survey design and validation}
\label{sec:survey}

Three data sets were computed.

\begin{itemize}
\item \textbf{Family $f=3$.}  The character $\chi_{-3}$; all primes
      $5\le p<10^5$, $p\neq3$, in both residue classes modulo $6$:
      $9{,}590$ primes.
\item \textbf{Family $f=5$.}  Both primitive quartic characters (the
      two $\F_p$-embeddings, as in \cite[\S5]{FourierF5}); all
      primes $p\equiv1\pmod4$, $p\neq5$, $p<10^5$: $9{,}564$
      character--prime pairs.
\item \textbf{Multi-conductor sweep.}  Every odd primitive character
      of conductor $f\le20$, $f\notin\{3,5\}$: thirty-nine
      characters over the eleven conductors
      $f\in\{4,7,8,9,11,13,15,16,17,19,20\}$ (there is no odd
      primitive character modulo $12$), of orders
      $d\in\{2,4,6,10,12,16,18\}$; all primes $p<2\cdot10^4$ with
      $d\mid p-1$.
\end{itemize}

The survey of Bernoulli spectra includes one small non-semisimple pair:
the primitive quadratic character of conductor $11$ at $p=5$, for
which $p\mid\varphi(11)$.  Its spectrum remains valid computational
data, but no class-group interpretation via \eqref{eq:MC} is asserted
for that pair.  Every row of Theorem~\ref{thm:deep}, and every row of
the conductor-three catalogue below, satisfies the semisimplicity
hypothesis.

For each pair $(p,\chi)$ every even index $2\le j\le p-3$ was
tested.  Validation layers, all of which passed:

\begin{enumerate}
\item the Bluestein spectrum agrees with the naive $O(p^2)$ sum for
      every even $j$ at $25$ small character--prime pairs;
\item the spectrum agrees with the defining Teichm\"uller sum
      \eqref{eq:b-def} (an independent mod-$p^2$ code path) at
      seeded random indices;
\item the published catalogues are reproduced exactly: the twelve
      lines of \cite[Theorem~4.1]{TwistedF3} and the eleven lines of
      \cite[Theorem~6.1]{FourierF5}, including every second-order
      digit in the $T/p^2$ and $B/p$ columns of both papers;
\item all $23$ published split-prime Artin certificates re-verify,
      with the exact printed values of the products and exponents;
\item every zero line found by the transform was re-proved from the
      definition through \eqref{eq:second-order} (the depth check
      asserts $v_p(T_{\chi,j})\ge2$); a deterministic
      ${\approx}1\%$ subsample (the $275$ lines with
      $p+j\equiv0\bmod97$) was verified a third time through the
      full $fp$-term sum modulo $p^3$, and the non-simple lines
      through exact sums modulo $p^6$;
\item the recomputed ordinary irregular indices reproduce the
      classical irregular pairs, with irregular-prime density
      $39.5\%$ against the heuristic $1-e^{-1/2}\approx39.35\%$
      \cite{BCEMS}.
\end{enumerate}

\section{The extended catalogues}
\label{sec:catalogues}

Table~\ref{tab:summary} summarizes the counts; the complete tables
are ancillary files (\S\ref{sec:repro}).

\begin{table}[ht]
\centering
\begin{tabular}{lrrrr}
\toprule
family & pairs $(p,\chi)$ & zero lines & expected & $z$\\
\midrule
$f=3$, \ $p<10^5$ & 9,590 & 4,812 & $4,792\pm69$ & $+0.29$\\
$f=5$, \ $p<10^5$, $p\equiv1\ (4)$ & 9,564 & 4,654 & $4,780\pm69$ & $-1.82$\\
sweep $f\le20$, \ $p<2\cdot10^4$ & 35,967 & 18,042 & $17,949\pm134$ & $+0.69$\\
\midrule
total & 55,121 & 27,508 & & \\
\bottomrule
\end{tabular}

\caption{Zero lines found, against the Poisson expectation
$\sum\lambda$, $\lambda=\lfloor(p-3)/2\rfloor/p$ per pair, with
$z=(\text{obs}-\text{exp})/\sqrt{\sum\lambda}$.}
\label{tab:summary}
\end{table}

Of the $\nZerosFthree$ conductor-three lines, $2{,}441$ have
$p\equiv1\pmod6$ and $2{,}371$ have $p\equiv5\pmod6$; of the
$\nZerosFfive$ conductor-five lines, $1{,}192$ have
$p\equiv1\pmod{20}$.  Table~\ref{tab:sample} shows the largest
construction-compatible rows of each family in the format of the
companion papers.  Table~\ref{tab:sweep} breaks the sweep down by
conductor.

\begin{table}[ht]
\centering
\small
\begin{tabular}{crrrrr@{\qquad}crrrrr}
\toprule
$f$ & $p$ & $j$ & $k$ & $T/p^2$ & $B/p$ & $f$ & $p$ & $j$ & $k$ & $T/p^2$ & $B/p$\\
\midrule
3 & 99,829 & 17,032 & 82,797 & 9,959 & 36,596 & 5 & 99,401 & 47,890 & 51,511 & 53,846 & 90,290\\
3 & 99,829 & 84,068 & 15,761 & 97,880 & 65,903 & 5 & 99,401 & 49,098 & 50,303 & 92,577 & 78,156\\
3 & 99,859 & 67,348 & 32,511 & 52,888 & 84,202 & 5 & 99,401 & 90,400 & 9,001 & 36,420 & 7,284\\
3 & 99,907 & 34,020 & 65,887 & 59,813 & 53,240 & 5 & 99,661 & 62,458 & 37,203 & 7,953 & 41,455\\
3 & 99,991 & 61,444 & 38,547 & 8,542 & 69,508 & 5 & 99,721 & 91,482 & 8,239 & 90,552 & 77,943\\
3 & 99,991 & 64,590 & 35,401 & 51,326 & 50,439 & 5 & 99,901 & 2,924 & 96,977 & 8,190 & 1,638\\
\bottomrule
\end{tabular}

\caption{The six largest zero lines with $p\equiv1\pmod f$ in each
of the two main families.  $T/p^2$ and $B/p$ are the exact divided
digits (all nonzero: every listed zero is simple).}
\label{tab:sample}
\end{table}

\begin{table}[ht]
\centering
\begin{tabular}{crrrrr}
\toprule
$f$ & characters & orders & pairs & zero lines & expected\\
\midrule
4 & 1 & 2 & 2,260 & 1,111 & 1,127.4\\
7 & 3 & 2,6 & 4,505 & 2,284 & 2,248.2\\
8 & 1 & 2 & 2,260 & 1,149 & 1,127.4\\
9 & 2 & 6 & 2,248 & 1,132 & 1,121.6\\
11 & 5 & 2,10 & 4,507 & 2,165 & 2,249.4\\
13 & 6 & 4,12 & 4,464 & 2,194 & 2,228.2\\
15 & 1 & 2 & 2,259 & 1,101 & 1,127.2\\
16 & 2 & 4 & 2,250 & 1,151 & 1,122.4\\
17 & 8 & 16 & 2,224 & 1,130 & 1,110.8\\
19 & 9 & 2,6,18 & 6,731 & 3,485 & 3,359.3\\
20 & 1 & 2 & 2,259 & 1,140 & 1,127.2\\
\bottomrule
\end{tabular}

\caption{The multi-conductor sweep, $p<2\cdot10^4$, $d\mid p-1$.}
\label{tab:sweep}
\end{table}

\section{Distributional statistics}
\label{sec:stats}

Under the random model each $b_{\chi,j}$ is uniform in $\F_p$, so
the zero count of a pair $(p,\chi)$ is approximately Poisson with
mean $\lambda_p=\lfloor(p-3)/2\rfloor/p\approx\tfrac12$.  In the
split situation tested here this is the specialization of the
random-model predictions used for $\lambda$-invariant distributions
in \cite{DelbourgoKnospe,Knospe} (vanishing probability $p^{-1}$ per
branch); our fixed-conductor, large-$p$ data complement their
fixed-$p$ tabulations across characters of conductor up to $1000$.

\begin{figure}[ht]
\centering
\includegraphics[width=\textwidth]{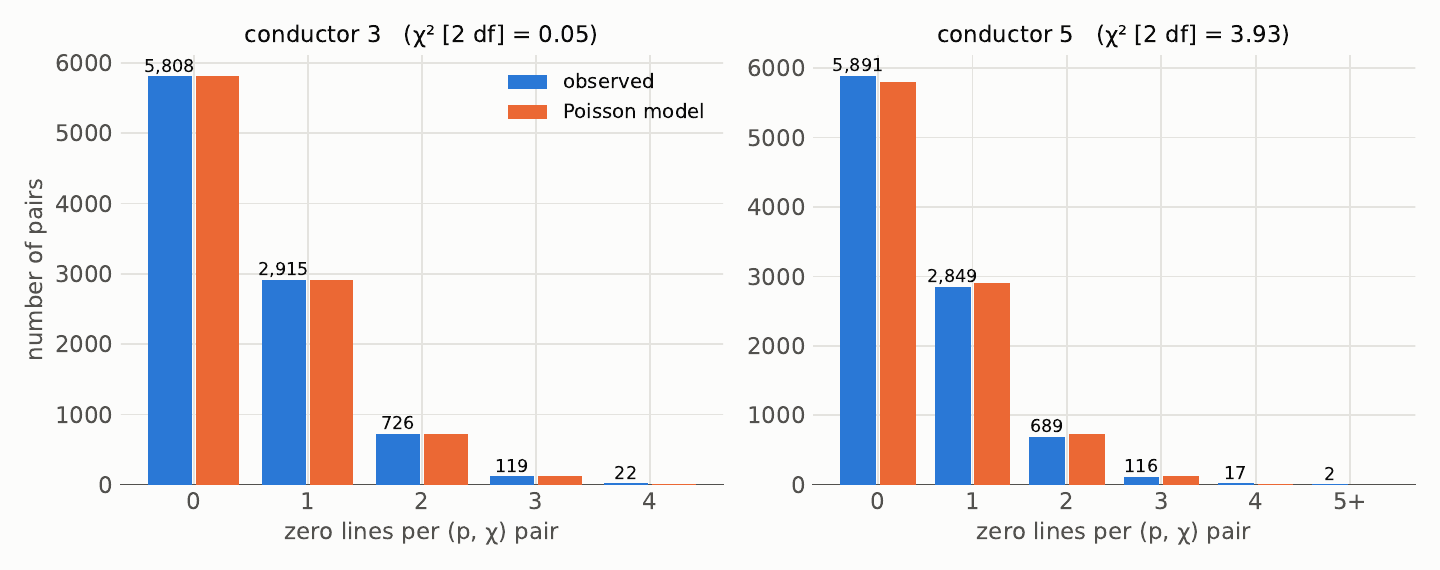}
\caption{Zero lines per pair $(p,\chi)$, observed versus the Poisson
model, for the two main families ($p<10^5$).}
\label{fig:poisson}
\end{figure}

\begin{figure}[ht]
\centering
\includegraphics[width=\textwidth]{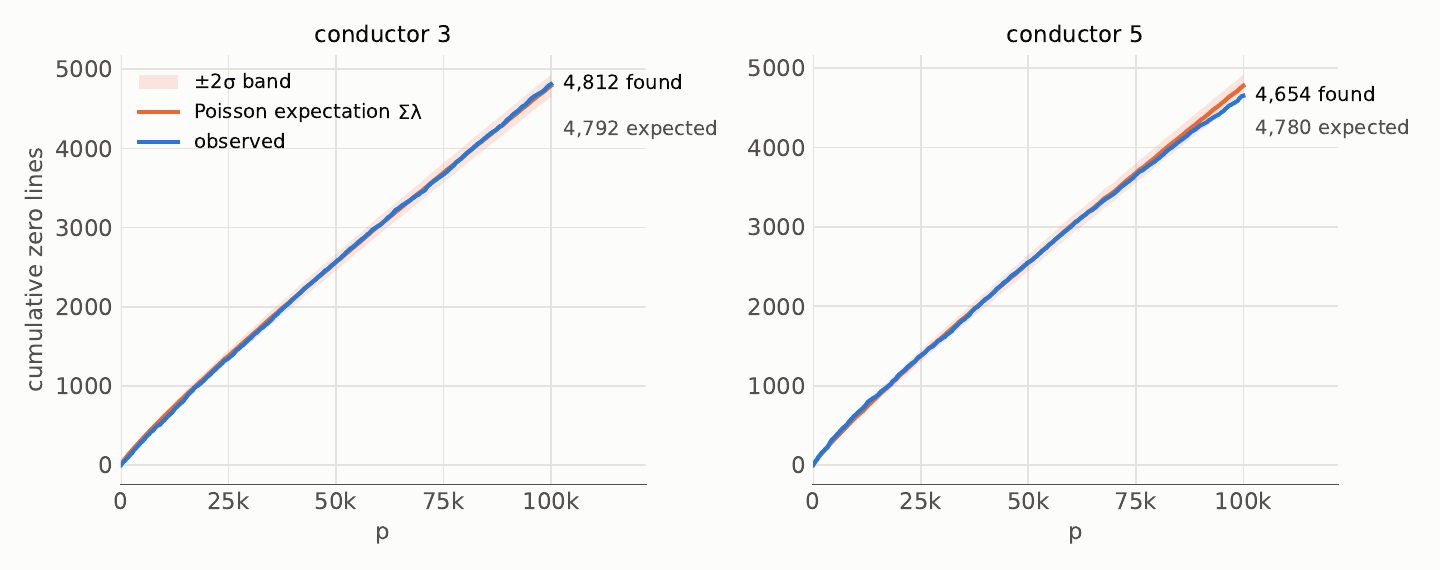}
\caption{Cumulative zero lines against the indexwise random-model
expectation $\sum_{(p,\chi,j)}p^{-1}$, summed over all tested indices
up to the plotted bound, with $\pm2\sigma$ band.}
\label{fig:cumulative}
\end{figure}

\begin{table}[ht]
\centering
\begin{tabular}{crrcrr}
\toprule
$k$ & $f{=}3$ obs. & $f{=}3$ exp. & & $f{=}5$ obs. & $f{=}5$ exp.\\
\midrule
0 & 5,808 & 5,818.4 & & 5,891 & 5,802.2\\
1 & 2,915 & 2,907.4 & & 2,849 & 2,899.7\\
2 & 726 & 726.5 & & 689 & 724.6\\
3 & 119 & 121.0 & & 116 & 120.7\\
$\ge4$ & 22 & 16.8 & & 19 & 16.7\\
\bottomrule
\end{tabular}

\caption{Zero-count histograms against the Poisson expectation.
The $\chi^2$ statistics (2~d.f., tail bucket at $k\ge2$) are
$\chiSqFthree$ for $f=3$ and $\chiSqFfive$ for $f=5$.}
\label{tab:hist}
\end{table}

\textbf{Counts.}  Table~\ref{tab:hist} and
Figures~\ref{fig:poisson}--\ref{fig:cumulative} show the agreement;
every sweep conductor in Table~\ref{tab:sweep} is within
$2.2\sigma$ of its expectation.  The record pair is
$p=36{,}541$, where one quartic character carries six zero lines
($j=4578$, $4968$, $13414$, $26838$, $27938$, $36316$).  At $f=3$
the split by residue class is $+1.0\sigma$ ($p\equiv1$) against
$-0.6\sigma$ ($p\equiv5$): no visible dependence on the splitting
behaviour of $p$ in $\Q(\zeta_f)$.

\textbf{Digits.}  The normalized divided digits
$\overline{B_{1,\chi\omega^{-j}}/p}/p\in(0,1)$ over the $\ksN$
simple zero lines of the two main families pass a
Kolmogorov--Smirnov test against the uniform distribution:
$D=\ksD$ against the $5\%$ critical value $\ksCrit$
(Figure~\ref{fig:digits}).  This is the quantitative content of the
``generic simplicity'' model: within the tested range, the second
digits show no statistically significant departure from uniformity.

\begin{figure}[ht]
\centering
\includegraphics[width=.62\textwidth]{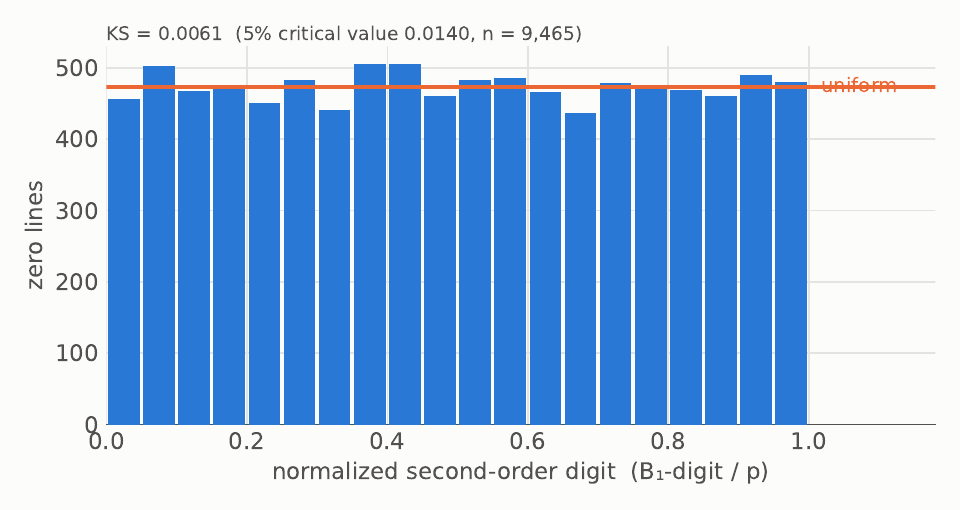}
\caption{Empirical distribution of the normalized second-order digits
over the simple zero lines of the conductor-$3$ and $5$ families
($D=\ksD$, $5\%$ critical value $\ksCrit$, $n=\ksN$).}
\label{fig:digits}
\end{figure}

\textbf{Conjugate-character comparison.}  Among the $\conjPrimes$ primes
carrying both quartic characters modulo $5$, the two spectra vanish
at a common index \emph{zero} times; $\conjBoth$ primes have a zero
for both characters, against $\conjBothExp$ expected under
independence, and the zero-count correlation is $r=\conjCorr$.  The
observation at $p=61$ in \cite[\S10]{FourierF5}---conjugate
characters selecting different indices---is thus the generic
behaviour in this data set, not an accident of the first example.  The
statistics are consistent with the independence model; they do not,
by themselves, prove independence.

\textbf{Comparison with classical irregularity.}  At $f=3$ the
zero rate per pair is $\rateReg$ at ordinarily regular primes and
$\rateIrr$ at irregular primes
($\chi^2(1\text{ d.f.})=\chiSqInd$, not significant), and
$\fracReg\%$ of all conductor-three zero lines occur at classically
\emph{regular} primes.  The ``majority phenomenon'' of
\cite{TwistedF3} persists at scale: ordinary irregularity neither
attracts nor repels twisted degeneracy detectably in the tested range.

\section{Non-simple zeros: components of order \texorpdfstring{$p^2$ and $p^3$}{p2 and p3}}
\label{sec:deep}

Conditioning on the zero lines found by the first-order survey, the
independent-uniform model predicts
\[
 \sum_{\substack{(p,\chi,j)\ \mathrm{in\ the\ survey}\\
                  b_{\chi,j}=0}}\frac1p\approx9.8
\]
non-simple zeros; eight occur.  Before the first-order zeros are
observed, the model has the ex ante indexwise expectation
$\sum_{(p,\chi,j)}p^{-2}\approx11.6$ over all tested indices; after
conditioning on the realized first-order zero set, this becomes the
displayed $\sum p^{-1}$.

\begin{theorem}
\label{thm:deep}
Exactly eight zero lines of the survey have
$v_p(B_{1,\chi\omega^{-j}})\ge2$, listed in
Table~\ref{tab:deep} with their exact valuations and leading divided
digits, computed from the full Teichm\"uller sums modulo $p^6$.
In each row, with $\psi=\overline\chi\,\omega^{j}$,
\[
 \#\bigl(\Cl(\Q(\zeta_{fp}))\otimes\Z_p\bigr)_\psi
 =p^{\,v_p(B_{1,\chi\omega^{-j}})},
\]
as recorded in the table.  In particular the
$\overline\chi\,\omega^{16}$-component at $(f,p)=(19,37)$ has order
$37^3=50{,}653$.
\end{theorem}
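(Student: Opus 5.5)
The theorem combines a finite computation with the Main Conjecture \eqref{eq:MC}, and I would prove it in three steps.

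\textbf{Step 1: the set of rows.} Every pair $(\chi,j)$ with $v_p(B_{1,\chi\omega^{-j}})\ge2$ has $b_{\chi,j}=0$, so it is among the $27{,}508$ zero lines of the survey. I would take the completeness of this zero-line list from the Bluestein spectrum of \S\ref{sec:algorithm}. Each line is independently reconfirmed by \eqref{eq:second-order}. Then, for each zero line, I would compute the divided digit $\overline{B_{1,\chi\omega^{-j}}/p}=\overline{T_{\chi,j}/p^2}$ exactly from \eqref{eq:second-order} modulo $p^2$. A line is non-simple exactly when this digit vanishes. The claim \emph{exactly eight} is then the finite check that this happens for precisely the eight listed triples. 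This needs the identity $T_{\chi,j}/p=fB_{1,\chi\omega^{-j}}$ as $p$-adic integers up to a unit, which follows from the definition \eqref{eq:b-def}. One must remember that $\widehat\chi$ and the Teichm\"uller values modulo $p^2$ have to be the genuine lifts in $\Z_p$, so that $\Z/p^2$-arithmetic computes the true second digit.

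\textbf{Step 2: exact valuations.} For the eight rows I would evaluate the full $fp$-term Teichm\"uller sum $T_{\chi,j}$ modulo $p^6$, using lifts of $\chi$ and $\omega$ modulo $p^6$ (Hensel lifting $a\mapsto a^{p^5}$). I would then read off $v=v_p(T_{\chi,j})-1$ and a nonzero leading digit. Since $v+1\le4<6$, the valuation is certified exactly and is not merely bounded below. The conversion to $v_p(B_{1,\chi\omega^{-j}})$ uses $p\nmid f$.

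\textbf{Step 3: class-group orders.} For each row I would check the hypotheses of \eqref{eq:MC}:
\begin{itemize}
\item $\psi=\overline\chi\omega^j$ is odd, since $\chi$ is odd and $j$ is even;
\item $\psi$ is primitive of conductor $fp$, because $2\le j\le p-3$ gives $\omega^j\ne1$ and $\gcd(f,p)=1$;
\item $\psi\ne\omega$, since $f>1$;
\item $p\nmid[K_p:\Q]=\varphi(f)(p-1)$, i.e.\ $p\nmid\varphi(f)$ (semisimplicity), checked row by row. Note that the excluded pair $(11,5)$ is not among the eight.
\end{itemize}
Then \eqref{eq:MC} gives $\length_{\Z_p}=v_p(B_{1,\psi^{-1}})$ with $\psi^{-1}=\chi\omega^{-j}$. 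Over $\Z_p$ the $\psi$-component is a $\Z_p[\psi]$-module, and the character values lie in $\Z_p$ because $d\mid p-1$. So the length is the exponent of the order, and the order is $p^{v}$. The $(19,37,16)$ row specializes this to $v=3$.

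The main obstacle is Step 1's reliance on computation. The theorem is only as sound as the completeness of the first-order zero list and the correctness of the lifted arithmetic. I would address this with the validation layers of \S\ref{sec:survey}: the naive cross-check, the independent mod-$p^2$ path, and the mod-$p^6$ recomputation for the deep rows.
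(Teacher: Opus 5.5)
Your proposal is correct and follows the paper's proof essentially step for step. The enumeration comes from the survey with its second-order depth check, the exact valuations and leading digits from the full Teichm\"uller sums modulo $p^6$, and the class-group orders from verifying the hypotheses of \eqref{eq:MC} ($\psi$ odd, primitive of conductor $fp$, $\psi\ne\omega$, $p\nmid\varphi(f)(p-1)$, values in $\Z_p$) and then converting $\Z_p$-length into order $p^v$.

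One harmless slip: since $T_{\chi,j}/p=fB_{1,\chi\omega^{-j}}$ exactly, you have $\overline{T_{\chi,j}/p^2}=\overline f\cdot\overline{B_{1,\chi\omega^{-j}}/p}$ rather than equality (compare the $T/p^2$ and $B/p$ columns of Table~\ref{tab:sample}). This does not affect the vanishing criterion, because $p\nmid f$.
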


\begin{table}[ht]
\centering
\small
\begin{tabular}{ccrrcccr}
\toprule
$f$ & $\mathrm{ord}\,\chi$ & $p$ & $j$ & $v_p(B_{1,\chi\omega^{-j}})$ & leading digit & $\#(\Cl\otimes\Z_p)_\psi$ & $p\bmod f$\\
\midrule
19 & 18 & 37 & 16 & 3 & 9 & $p^3=50,653$ & 18\\
16 & 4 & 73 & 28 & 2 & 40 & $p^2=5,329$ & 9\\
7 & 2 & 173 & 76 & 2 & 127 & $p^2=29,929$ & 5\\
20 & 2 & 193 & 166 & 2 & 155 & $p^2=37,249$ & 13\\
7 & 6 & 211 & 2 & 2 & 15 & $p^2=44,521$ & 1\\
3 & 2 & 257 & 156 & 2 & 137 & $p^2=66,049$ & 2\\
13 & 4 & 373 & 68 & 2 & 25 & $p^2=139,129$ & 9\\
19 & 2 & 2711 & 2670 & 2 & 2559 & $p^2=7,349,521$ & 13\\
\bottomrule
\end{tabular}

\caption{The eight non-simple twisted zeros.  ``Leading digit'' is
$\overline{B_{1,\chi\omega^{-j}}/p^{\,v}}\in\F_p^\times$ with
$v=v_p(B_{1,\chi\omega^{-j}})$; its nonvanishing certifies that the
listed valuation is exact.}
\label{tab:deep}
\end{table}

\begin{proof}
The enumeration is the survey plus validation layer (5) of
\S\ref{sec:survey}; the valuations and digits are exact integer
computations modulo $p^6$, reproduced by the ancillary program.
For the class-group statement, fix a row and put
$\psi=\overline\chi\,\omega^j$.  Then $\psi$ is odd (as $\chi$ is
odd and $j$ even), primitive of conductor $fp$ (as $\chi$ is
primitive modulo $f$ and $j\not\equiv0\bmod p-1$), distinct from
$\omega$, and its values lie in $\Z_p^\times$ since $d\mid p-1$.
Moreover $p\nmid[\Q(\zeta_{fp}):\Q]=\varphi(f)(p-1)$, because
$\varphi(f)\le18<p$ in every row; the isotypic decomposition is
therefore semisimple, and the finite-level odd-character consequence
of the abelian Main Conjecture
\cite[Theorem~4.1]{Greither}, \cite{MazurWiles} gives
\eqref{eq:MC} with
$v_p(B_{1,\psi^{-1}})=v_p(B_{1,\chi\omega^{-j}})$.
A $\Z_p$-module of length $v$ over the discrete valuation ring
$\Z_p$ has order $p^v$.
\end{proof}

\begin{remark}
Three features of Table~\ref{tab:deep} deserve notice.
(i)~The conductor-three double zero occurs at $p=257\equiv5\pmod6$,
where $p$ is inert in $\Q(\zeta_3)$: it lies just outside the
hypotheses of the radical construction of \cite{TwistedF3}, making
the inert-prime extension of that construction a concrete, motivated
problem, with an order-$257^2$ target component.
(ii)~The row $(f,p,j)=(7,211,2)$ is a double zero at the lowest
admissible index.
(iii)~Four of the eight rows occur on quadratic character lines
($\chi_{-3}$, $\chi_{-20}$, and the Legendre characters of
conductors $7$ and $19$), so depth ${\ge}2$ is not a phenomenon of
large character order.
None of the eight rows falls within the $p\equiv1\pmod f$,
$p<500$ ranges of the published catalogues, consistent with the
simplicity findings there.
\end{remark}

\section{The extended conductor-three catalogue}
\label{sec:extended}

Throughout this section $p\equiv1\pmod6$, $K_p=\Q(\zeta_{3p})$, and
$\widetilde\mu_k$ denotes the integral projected circular unit of
\cite[\S2]{TwistedF3} attached to $\theta=\chi_{-3}\omega^{k}$,
$k=p-j$.

\begin{theorem}
\label{thm:extended}
For $p<10^5$, $p\equiv1\pmod6$, the equality $b_{\chi_{-3},j}=0$
holds for exactly $2{,}441$ pairs $(p,j)$.  Every such zero is
simple.  For each, the extension
$K_p(\widetilde\mu_{p-j}^{1/p})/K_p$ is nontrivial, everywhere
unramified, and is the complete
$\psi=\chi_{-3}\omega^{j}$-component of the Hilbert class field of
$K_p$; in particular
\[
 \#\bigl(\Cl(K_p)\otimes\Z_p\bigr)_\psi=p .
\]
\end{theorem}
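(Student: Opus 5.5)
The proof has a computational part and a theoretical part. The theoretical part is the conditional catalogue theorem of \cite{TwistedF3}, which the companion paper proves for $p<500$. Its argument is uniform in $p$ except at two points. The first is the exact second-order digit, which gives simplicity. The second is the split-prime Artin certificate, which gives nontriviality. So the plan is to re-run exactly those two inputs for every $p<10^5$ and then quote the structural part of \cite{TwistedF3} verbatim.

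\emph{Step 1: enumeration and count.} For each of the primes $p\equiv1\pmod 6$ below $10^5$, compute the spectrum $(b_{\chi_{-3},j})_{2\le j\le p-3}$ by Lemma~\ref{lem:weights} with weights $(-1,-1,2)$. Evaluate it with the Bluestein convolution of \S\ref{sec:algorithm}. This yields the $2{,}441$ zero lines. Each line is then re-proved from the definition through \eqref{eq:second-order}, as in validation layer (5), so the count does not depend on the fast transform alone.

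\emph{Step 2: simplicity.} For each line, compute the divided digit $\overline{B_{1,\chi\omega^{-j}}/p}$ via \eqref{eq:second-order} modulo $p^2$. Its nonvanishing gives $v_p(B_{1,\chi\omega^{-j}})=1$. Next check the hypotheses of \eqref{eq:MC} exactly as in the proof of Theorem~\ref{thm:deep}:
\begin{itemize}
\item $\psi$ is odd, since $\chi_{-3}$ is odd and $j$ is even;
\item $\psi$ is primitive of conductor $3p$, since $j\not\equiv0\pmod{p-1}$;
\item $\psi\neq\omega$;
\item $p\nmid 2(p-1)$, so the decomposition is semisimple.
\end{itemize}
Then \eqref{eq:MC} gives $\#(\Cl(K_p)\otimes\Z_p)_\psi=p$. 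In particular this component is cyclic of order $p$, so its Hilbert class field component is a single degree-$p$ Kummer extension.

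\emph{Step 3: the generator.} By \cite{TwistedF3}, the Stirling projector identifies the local Kummer spectrum of $\widetilde\mu_{p-j}$ with the $b$-spectrum. When $b_{\chi_{-3},j}=0$, this makes $\widetilde\mu_{p-j}$ a local $p$-th power at the prime above $p$ in the relevant sense. Hence $K_p(\widetilde\mu_{p-j}^{1/p})/K_p$ is unramified at $p$. It is unramified away from $p$ automatically, since $\widetilde\mu_{p-j}$ is a unit. Also, $\widetilde\mu_{p-j}$ lies in the correct eigenspace, so the extension is Galois over $\Q$ with $\psi$-isotypic Kummer group. The remaining point is that $\widetilde\mu_{p-j}$ is not a global $p$-th power.

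For this, choose a prime $\ell\equiv1\pmod{3p}$, split completely in $K_p$, and a prime $\mathfrak l\mid\ell$. Compute $\widetilde\mu_{p-j}^{(\ell-1)/p}\bmod\mathfrak l$ by evaluating the product of cyclotomic units with exponents at a chosen root of unity in $\F_\ell$, and verify that it is $\neq1$. This certificate shows the Kummer extension is nontrivial. By class field theory it is then an unramified degree-$p$ $\psi$-extension, hence equal to the complete $\psi$-component by Step 2.

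\emph{Main obstacle.} The expected bottleneck is Step 3 at scale. The projected unit is a product of roughly $p$ circular units with exponents given by Stirling-type polynomials in $\F_p$. Producing a witness therefore means searching over $\ell$ and evaluating an $O(p)$-term product modulo $\ell$ for each of the $2{,}441$ lines. I would try the smallest split $\ell$ first and work with the exponents reduced modulo $p$. A single evaluation costs $O(p\log\ell)$, consistent with the sub-second certification at $p=99{,}991$. The primes $\ell$ and the resulting power residues would be recorded in the ancillary files, so that an independent checker can re-verify each certificate.
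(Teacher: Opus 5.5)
Your proposal is correct and matches the paper's proof step for step. Enumeration and simplicity come from the doubly verified survey computations. The local $p$-th power property of $\widetilde\mu_{p-j}$ is imported from \cite{TwistedF3}, and unramifiedness away from $p$ is automatic. Nontriviality comes from a row-by-row split-prime Artin symbol at a prime $q\equiv1\pmod{3p}$, and completeness from simplicity together with \eqref{eq:MC}.

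One small imprecision: because $p\equiv1\pmod 6$, $p$ splits in $\Q(\zeta_3)$, so $K_p$ has two primes above $p$ rather than ``the prime above $p$''. The cited result of \cite{TwistedF3} gives the local $p$-th power property at both primes, which is what unramifiedness at $p$ requires.
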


\begin{proof}
The enumeration and the simplicity of every zero (nonvanishing
divided digit) are the survey computations, doubly verified as in
\S\ref{sec:survey}.  For fixed $(p,j)$ the argument of
\cite{TwistedF3} applies verbatim: $b_{\chi,j}=0$ makes
$\widetilde\mu_{p-j}$ a local $p$-th power at both primes above $p$
\cite[Theorem~6.1, Proposition~6.3]{TwistedF3}, and unramifiedness
away from $p$ is automatic for a unit radical, so the extension is
everywhere unramified.  Nontriviality is certified row by row: the
ancillary file \texttt{certificates\_f3.csv} lists, for each of the
$2{,}441$ pairs, a prime $q\equiv1\pmod{3p}$, an element $\Xi$ of
exact order $3p$ in $\F_q^\times$, the reduced projected product
$M$, and an exponent $e\not\equiv0\pmod p$ with
$M^{(q-1)/p}=\eta_q^{\,e}\neq1$, where $\eta_q=\Xi^3$; a global
$p$-th power would have trivial symbol at every such prime.  The
certificates were generated by one program and re-verified by an
independent implementation.  Completeness then follows from the
simplicity of the zero and \eqref{eq:MC} exactly as in
\cite[Theorem~8.1]{TwistedF3}.
\end{proof}

\begin{remark}
Certificates are not unique: a different split prime $q$, or a
different generator $\Xi$ at the same $q$, gives a different valid
witness.  The ancillary certificates for the twelve rows with
$p<500$ therefore need not coincide with those printed in
\cite{TwistedF3}, although those also re-verify.
\end{remark}

The largest two rows of Theorem~\ref{thm:extended} occur at
$p=99{,}991$, with $j\in\{61444,\,64590\}$.  We record the first in
full; to the author's knowledge it is the largest explicitly generated
class-field component presently recorded in this family.

\begin{theorem}
\label{thm:record}
Let $p=99{,}991$, $K=\Q(\zeta_{299973})$ (a field of degree
$\varphi(299973)=199{,}980$), and
$\psi=\chi_{-3}\omega^{61444}$.  Then
\[
 \#\bigl(\Cl(K)\otimes\Z_p\bigr)_\psi=p=99{,}991,
\]
and $\widetilde\mu_{38547}$ generates the complete
$\psi$-component of the Hilbert class field of $K$.  An explicit
certificate: $b_{\chi,61444}=0$ with divided digits
$T/p^2\equiv8542$ and
$B_{1,\psi^{-1}}/p\equiv69508\not\equiv0\pmod p$, and at
\[
 q=1{,}199{,}893=1+12p,\qquad
 \Xi=81\ (\text{exact order }3p),\qquad
 M=347{,}016,
\]
one has $M^{(q-1)/p}=\eta_q^{\,46540}\neq1$.
\end{theorem}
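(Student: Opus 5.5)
The plan is to specialise the proof of Theorem~\ref{thm:extended} to the single row $(p,j)=(99991,61444)$ and make each ingredient explicit.

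First, check the arithmetic of the setting. Confirm that $p=99{,}991$ is prime and that $p\equiv1\pmod 6$. Then $K=\Q(\zeta_{3p})$ has conductor $3p=299{,}973$ and degree $\varphi(3)(p-1)=2\cdot 99{,}990=199{,}980$. Since $p\nmid 199{,}980$, the decomposition into isotypic components is semisimple. Next, $\psi=\chi_{-3}\omega^{61444}$ is odd because $j$ is even, and it is primitive of conductor $3p$ because $61444\not\equiv0\pmod{p-1}$. It is distinct from $\omega$, and its values lie in $\Z_p^\times$. So \eqref{eq:MC} applies with $v_p(B_{1,\psi^{-1}})=v_p(B_{1,\chi\omega^{-j}})$. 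Since $\chi_{-3}$ is quadratic, $\overline\chi=\chi_{-3}$, so $\psi$ matches the notation of Theorem~\ref{thm:deep}.

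Second, certify the valuation. Evaluate Lemma~\ref{lem:weights} at $j=61444$; with the weights $(-1,-1,2)$ this is $3\sum_{r\equiv2\,(3)}r^{-j}$ in $\F_p$, and it vanishes. Then use \eqref{eq:second-order} to compute the digit $T/p^2\equiv8542$, and the full Teichm\"uller sum to get $\overline{B_{1,\psi^{-1}}/p}=69508\neq0$. This gives $v_p(B_{1,\chi\omega^{-j}})=1$ exactly. By \eqref{eq:MC}, the $\psi$-component has length $1$ over $\Z_p$, hence order $p$.

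Third, establish generation. Because $b_{\chi,j}=0$, \cite[Theorem~6.1, Proposition~6.3]{TwistedF3} make $\widetilde\mu_{k}$ with $k=p-j=38547$ a local $p$-th power at both primes above $p$. Since $\widetilde\mu_k$ is a unit, the extension $K(\widetilde\mu_k^{1/p})/K$ is unramified outside $p$ automatically, so it is everywhere unramified. Equivariance of the projector places this extension in the $\psi$-part of the Hilbert class field; here I would cite the Kummer-duality bookkeeping of \cite{TwistedF3} unchanged.

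For nontriviality, use the split prime $q=1+12p=1{,}199{,}893$, checking that $q$ is prime and $q\equiv1\pmod{3p}$. Verify that $\Xi=81$ has exact order $3p$ in $\F_q^\times$, meaning $81^{3p}=1$, $81^{3}\neq1$ and $81^{p}\neq1$. Reduce the projected product modulo the prime of $K$ above $q$ determined by $\zeta_{3p}\mapsto\Xi$ to obtain $M=347{,}016$. Then check $M^{(q-1)/p}=M^{12}=\eta_q^{46540}\neq1$ with $\eta_q=\Xi^3$. If $\widetilde\mu_k$ were a global $p$-th power, its $p$-th power residue symbol at $q$ would be trivial. Hence the extension is a nontrivial cyclic degree-$p$ unramified extension. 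Since the $\psi$-component has order exactly $p$, this extension is the complete $\psi$-component of the Hilbert class field, following \cite[Theorem~8.1]{TwistedF3}.

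The main obstacle is the reduction step producing $M$. The projected circular unit is a product of $p$-adically weighted powers of cyclotomic units $1-\zeta^a$ over roughly $2\cdot10^5$ exponents. It has to be reduced modulo $q$ consistently with the Kummer projection, using the projector exponents of \cite[\S2]{TwistedF3} modulo $p$, which suffices because we then raise to the power $(q-1)/p$. I would carry this out with the ancillary integer program and cross-check it with the independent implementation.
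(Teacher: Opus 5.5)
Your proposal is correct and is essentially the paper's argument: Theorem~\ref{thm:record} is the single row $(p,j)=(99991,61444)$ of Theorem~\ref{thm:extended}. That theorem's proof runs through the same steps you give: the Main Conjecture hypotheses plus the nonzero digit give order $p$; \cite[Theorem~6.1, Proposition~6.3]{TwistedF3} gives local $p$-th powers above $p$, hence an everywhere unramified extension; the split-prime symbol at $q=1+12p$ gives nontriviality; and completeness follows. One small economy: since $b_{\chi,j}=\overline{fB_{1,\chi\omega^{-j}}}$, the two digits satisfy $T/p^2\equiv 3\cdot 69508\equiv 8542\pmod p$, so a single computation via \eqref{eq:second-order} certifies both.
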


The certificate was found in $1.2$ seconds and re-verifies in
$0.8$ seconds by schoolbook modular arithmetic; the full
$199{,}980$-term product was recomputed by an independent
implementation.  For comparison, the generic route to any such
statement---\texttt{bnrclassfield} over \texttt{bnfinit}---requires
the unconditional class group and units of the base field.  On the
machine used for the survey, \texttt{bnfinit(polcyclo(3p),1)} in
PARI/GP~2.15.4 \cite{PARI} completed in $0.03$/$0.12$/$0.74$/$50$
seconds at degrees $12$/$24$/$36$/$60$, was killed after $900$
seconds at degree $84$, and at degree $132$---the field
$\Q(\zeta_{201})$ of the first catalogue row of
\cite{TwistedF3}, with $|d_K|\approx10^{269}$---exhausted a
$3.8$\,GB stack ($|d_K|$ for the field of
Theorem~\ref{thm:record} has about $1.05$ million digits).  This is
no criticism of general-purpose tools: it quantifies the gap between
generic and certificate-based access to these components.  Analytic
relative-class-number methods do give per-character
\emph{orders} cheaply; what they do not produce is an explicit
generator with a finite, independently checkable witness.

\section{Concluding remarks}

The survey supplies large-scale evidence for the distributional model
posed in \cite{TwistedF3,FourierF5}.  In the tested range, the counts,
depths, and second digits are consistent with uniform random residues;
no statistically significant dependence is detected either across
conjugate characters or between twisted degeneracy and classical
irregularity.  Four directions are singled out by the data.

\begin{enumerate}
\item \textbf{The inert fork.}  The $2{,}371$ conductor-three lines
      with $p\equiv5\pmod6$---including the depth-two line at
      $(257,156)$---show the Bernoulli side is equally rich where
      the circular-unit construction is not yet available.
      Extending the local analysis of \cite{TwistedF3} to inert
      $p$, with the order-$257^2$ component as first target, is a
      well-posed problem.
\item \textbf{Depth-two radicals.}  Whether the projected unit can
      exhibit a generator of a $p^2$-component---at $(257,156)$, or
      at the eight rows of Table~\ref{tab:deep} generally---is the
      ``depth'' question raised in both companion papers, now with
      concrete instances.
\item \textbf{Range.}  Extending the two main families to $p<10^6$
      costs two to three CPU-days with the present implementation and
      would yield ${\approx}69{,}000$ further lines; but new
      non-simple zeros are \emph{not} expected there
      ($\sum_{(p,\chi,j)}p^{-2}\approx0.2$ over the newly tested
      indices).  Depth hunting is
      better served by widening the conductor sweep, where small
      primes dominate.
\item \textbf{Tables.}  The zero-line tables extend the tradition of
      generalized-irregularity and $\lambda$-invariant tabulation
      \cite{Ernvall,Holden,DelbourgoKnospe,Knospe} in a complementary
      direction---much larger $p$ at fixed small conductor---and add
      two layers those tables do not carry: exact orders of vanishing
      with leading digits, and, on the conductor-three family,
      individual Artin certificates.  The ancillary format is close
      to that of the LMFDB class-group data and is offered for
      incorporation.
\end{enumerate}

\section{Reproducibility and ancillary files}
\label{sec:repro}

All computations are deterministic integer arithmetic (Python
driving FLINT \cite{FLINT}; PARI/GP \cite{PARI} only for the timing
comparison).  The ancillary directory contains:

\begin{itemize}
\item \texttt{anc/zeros\_f3.csv}, \texttt{anc/zeros\_f5.csv},
      \texttt{anc/zeros\_sweep.csv}: all $\nZerosTotal$ zero lines
      with conductor, character label, $p$, $j$, $k=p-j$, exact
      divided digits, simplicity flag, and cross-check flag;
\item \texttt{anc/certificates\_f3.csv}: split-prime certificates
      $(p,j,k,q,\Xi,M,e)$ for all $2{,}441$ rows of
      Theorem~\ref{thm:extended};
\item \texttt{anc/irregular.csv}: ordinary irregular indices for
      all $p<10^5$;
\item \texttt{anc/twisted.py}, \texttt{anc/certificates.py},
      \texttt{anc/verify\_survey.py}: the library and a
      verification program that reproduces the published-catalogue
      check, the validation layers of \S\ref{sec:survey}, the
      valuations of Table~\ref{tab:deep}, and (in full mode) every
      certificate.
\end{itemize}

\section*{Acknowledgements}

Computational exploration, drafting, and verification were assisted
by Anthropic's Claude (Fable~5), which carried out the survey
computations, the statistical analysis, and an adversarial audit of
every numerical claim against the raw data.  Responsibility for the
mathematical statements and final presentation remains with the
author.

\end{document}